\theoremstyle{plain}
\newtheorem{theorem}{Theorem}
\newtheorem*{theorem*}{Previous Work}
\newtheorem{lemma}[theorem]{Lemma}
\theoremstyle{definition}
\newtheorem{definition}[theorem]{Definition}
\theoremstyle{remark}
\newtheorem*{remark}{Remark}
\newcommand{\address}[1]{\gdef\@address{#1}}
\newcommand{\email}[1]{\gdef\@email{\url{#1}}}
\newcommand{\@endstuff}{\par\vspace{\baselineskip}\noindent\small
\begin{tabular}{@{}l}\scshape\@address\\\textit{E-mail address:} \@email\end{tabular}}
\title{An explicit description of $(1,1)$ L-space knots, and non-left-orderable surgeries}
\author{Zipei Nie}
\address{Nine-Chapter Lab, Huawei}
\email{niezipei@huawei.com}
\begin{document} 
\maketitle
\abstract{Greene, Lewallen and Vafaee characterized $(1,1)$ L-space knots in $S^3$ and lens space in the notation of coherent reduced $(1,1)$-diagrams. We analyze these diagrams, and deduce an explicit description of these knots. With the new description, we prove that any L-space obtained by Dehn surgery on a $(1,1)$-knot in $S^3$ has non-left-orderable fundamental group.} 

\section{Introduction}

	An L-space is a rational homology $3$-sphere with minimal Heegaard Floer homology, that is, $\dim\widehat{HF} =|H_1(Y)|$. A nice topological property of L-spaces is that \cite{OS} they do not admit co-orientable taut foliations, and its converse statement is only partially verified. Another conjectural property of L-spaces is the non-left-orderability of fundamental groups \cite{BGW}, that is, there does not exist a total order $\le$ on the fundamental group such that $g\le h$ implies $fg\le fh$. Although we have multiple computational tools, the Heegaard Floer data is not easy to utilize. Therefore, a better characterization of L-spaces would be helpful.

	One way to construct L-spaces is via Dehn surgeries. A knot $K$ is called an L-space knot, if it admits an L-space surgery. It is a positive (resp. negative) L-space knot if it admits a positive (resp. negative) L-space surgery. The Dehn surgery along a nontrivial positive L-space knot $K$ in $S^3$ with slope $\frac{p}{q}$ yields an L-space if and only if $\frac{p}{q}\ge 2g(K)-1$ \cite{OS10}. Similar results also hold for knots in other L-spaces \cite{RR15}.

	For a closed orientable $3$-manifold $Y$, we say that a knot $K$ in $Y$ is a $(g,b)$-knot, if there exists a Heegaard splitting $Y= U_0 \cup U_1$ of genus $g$, such that each of $K\cap U_0$ and $K\cap U_1$ consists of $b$ trivial arcs. The family of $(1,1)$-knots (also called $1$-bridge torus knot in the literature) in the $3$-sphere and lens spaces is widely studied. The knot Floer invariants arises diagrammatically \cite{GMM05,Hed11,Ras05} if we can find a $(1,1)$-decomposition.

	A $(1,1)$-diagram for a $(1,1)$-knot $K$ in the three-sphere or lens space $Y$ a doubly-pointed Heegaard diagram $(\Sigma,\alpha,\beta,w,z)$, which consists of two simple closed curves $\alpha$ and $\beta$ on the torus $\Sigma$ and two basepoints $w$ and $z$ in $\Sigma-\alpha-\beta$. The diagram $(\Sigma,\alpha,\beta,w,z)$ is called reduced if each bigon contains a basepoint. In this case, the diagram can be specified \cite{Ras05} by four parameters $p,q,r,s$. Via successive isotopies to removing empty bigons, every $(1,1)$-knot has a reduced $(1,1)$-diagram. In \cite{GLV}, Greene, Lewallen and Vafaee established the following criterion to determine whether a reduced $(1,1)$-diagram represents an L-space knot.

\begin{theorem*}\label{GLV-coherent} \cite[Theorem 1.2]{GLV}
A reduced $(1,1)$-diagram represents an L-space knot if and only if it is coherent, that is, there exist orientations on $\alpha$ and $\beta$ that induce coherent orientations on the boundary of every embedded bigon $(D,\partial D)\subseteq (\Sigma, \alpha \cup \beta)$. It represents a positive or negative L-space knot according to the sign of $\alpha \cdot \beta$ with coherent orientation.
\end{theorem*}

Building on their work, we describe the family of $(1,1)$ L-space knots explicitly as follows.

\begin{theorem}\label{main-description}
Let $Y=U_0\cup_{\Sigma} U_1$ be a genus one Heegaard splitting of a three-sphere or lens space, with standard geometry. A knot in $Y$ is a $(1,1)$ L-space knot if and only if it is isotopic to a union of three arcs $\rho\cup \tau_0 \cup \tau_1$, such that    
	\begin{enumerate}[label=(\alph*)]
		\item $\rho$ is a geodesic of $\Sigma$;
		\item $\tau_0$ is properly embedded in some meridional disk of $U_0$;
		\item $\tau_1$ is properly embedded in some meridional disk of $U_1$.
	\end{enumerate}
\end{theorem}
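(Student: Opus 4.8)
The plan is to build directly on the Greene--Lewallen--Vafaee criterion and reduce the classification to a combinatorial analysis of reduced $(1,1)$-diagrams in Rasmussen's normal form, which I then transport to the flat geometry of the standard splitting. First I would fix notation by recalling Rasmussen's description of a reduced diagram $(\Sigma,\alpha,\beta,w,z)$ by four integer parameters $(p,q,r,s)$: cutting $\Sigma$ along $\alpha$ turns it into an annulus in which $\beta$ appears as a standardized family of $|\alpha\cdot\beta|=p$ strands crossing $\alpha$, while $r,s$ record the complementary regions containing $w$ and $z$. I would also recall the reconstruction of $K$ as the union of an arc joining $w$ to $z$ in $\Sigma\setminus\alpha$ (pushed into the $\alpha$-handlebody $U_0$) and an arc joining $z$ to $w$ in $\Sigma\setminus\beta$ (pushed into $U_1$); since each avoids the relevant meridian, each becomes a trivial arc, recovering the $(1,1)$-structure.

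Next I would extract the meaning of coherence in these coordinates. By the quoted theorem it suffices to decide when $\alpha$ and $\beta$ admit orientations making the boundary of every embedded bigon coherent. The embedded bigons of the normal form are the elementary regions pinched off between consecutive strands of $\beta$ and the curve $\alpha$, so I would enumerate them in terms of $(p,q,r,s)$ and determine exactly which parameter ranges admit a coherent orientation. I expect the outcome to be a monotonicity condition: the strands of $\beta$ never reverse their direction of crossing relative to $\alpha$, equivalently the two basepoints occupy a single ``coherent block'' of the diagram. Establishing this equivalence---matching the full bigon inventory against constraints on the parameters and ruling out every incoherent configuration---is the step I expect to be the main obstacle.

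With the coherence condition in hand I would pass to the standard geometry, where $\alpha$ and $\beta$ are the geodesic meridians of $U_0$ and $U_1$. For the forward (necessity) direction I take a coherent diagram and isotope the reconstructed knot: monotonicity lets me push the two reconstruction arcs back onto $\Sigma$ and collect the winding of $K$ along the surface into a single geodesic $\rho$, while the two points at which $K$ must leave the surface to stay embedded become one short bridge $\tau_0$ into $U_0$ and one short bridge $\tau_1$ into $U_1$; pushing each bridge into a meridional disk and tracking endpoints yields the asserted triangle $\rho\cup\tau_0\cup\tau_1$. For the backward (sufficiency) direction I reverse the construction: given $\rho\cup\tau_0\cup\tau_1$ in standard position I place $w$ and $z$ next to the feet of the bridges, read off a doubly-pointed diagram, check it is reduced, and verify that the straightness of $\rho$ together with the disk arcs forces a coherent orientation---so that GLV returns an L-space knot, positive or negative according to the sign of $\alpha\cdot\beta$.

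Two technical points would need care. First, the isotopies pushing the bridges into meridional disks must stay embedded and disjoint from $\rho$; I would carry them out inside disjoint collar and disk neighborhoods and invoke the uniqueness up to isotopy of a trivial arc in a solid torus. Second, I would check the degenerate cases separately---$Y=S^3$ versus a genuine lens space, and small values such as $p=1$---to be sure the normal form, the bigon count, and the ``reduced'' hypothesis all remain valid there.
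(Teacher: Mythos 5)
Your proposal is a plan rather than a proof, and the two places where you defer the work are exactly where the content of the theorem lies. First, your bigon inventory rests on a mistaken premise: in a \emph{reduced} diagram, the ``elementary regions pinched off between consecutive strands of $\beta$ and the curve $\alpha$'' would be empty bigons, and those have all been removed --- that is what reduced means. The embedded bigons relevant to coherence are precisely the ones containing a basepoint; they can be large embedded disks whose $\alpha$- and $\beta$-edges traverse many intersection points and wrap around the torus, and their interaction is what makes the classification nontrivial. So the step you yourself flag as ``the main obstacle'' --- matching the bigon inventory against constraints on $(p,q,r,s)$ --- is not only left undone but is set up against the wrong collection of bigons; no characterization of coherence in those parameters is actually derived (the paper never derives one either: it works directly with the coherent orientation rather than with Rasmussen's parameters).

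Second, in the necessity direction you assert that ``monotonicity lets me push the two reconstruction arcs back onto $\Sigma$ and collect the winding of $K$ along the surface into a single geodesic $\rho$.'' This is the crux, and nothing in your outline supplies it. The paper has to (i) construct a \emph{positive} arc $\gamma_0$ from $w$ to $z$, meaning an arc that crosses $\alpha\cup\beta$ always from the right side to the left side, via a rectangle-extension argument ending in a sign-sequence contradiction; (ii) upgrade $\gamma_0$ to a positive simple \emph{closed} curve $\gamma$ through both basepoints, which depends on a length inequality ($l_3\le l_4$ for the $\alpha$-edges of rectangles spanned by oppositely oriented band strands) that justifies a linear foliation of irrational slope in which $\beta$ is monotone; and only then (iii) straighten the picture so that the knot appears as a geodesic together with two arcs in meridional disks. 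Combinatorial monotonicity of a diagram does not by itself yield a geodesic representative --- one needs the closed curve $\gamma$ and the foliation to define the direction in which to straighten, and producing $\gamma$ is where most of the paper's effort goes. Your sufficiency sketch does match the paper's last step in outline (the paper adapts the construction of Greene--Lewallen--Vafaee, allowing $z$ to be an arbitrary point), but to complete it you must also check that each isotopy removing an empty bigon preserves coherence, so that the criterion can legitimately be applied to the resulting reduced diagram.
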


	Note that, if $\tau_0$ or $\tau_1$ is of length zero, then by definition, the knot is a $1$-bridge braid in $Y$. The study of $1$-bridge braids originates from the classification of knots in a solid torus with nontrivial solid torus surgeries \cite{Berge, Gabai89,Gabai90}, where it is shown that every such knot is a torus knot or a $1$-bridge braid. If we put solid torus in the standard position in $S^3$, these knots has nontrivial lens space surgeries. And as its name suggests, any lens space is an L-space. It is also proved that any $1$-bridge braid in the three-sphere or lens space is an L-space knot \cite{GLV}, and the L-spaces obtained by surgeries along $1$-bridge braids in $S^3$ has non-left-orderable fundamental groups \cite{Nie}. In line with these researches, we deduce similar properties of $(1,1)$ L-space knots in $S^3$.

\begin{theorem}\label{alternative-description}
A nontrivial positive $(1,1)$ L-space knot in $S^3$ can be represented as the closure of the braid
$$\left (\sigma_{\omega} \sigma_{\omega-1}\cdots \sigma_{\omega-b_0+1} \right)\left(\sigma_{\omega}\sigma_{\omega-1}\cdots \sigma_{1}\right)^{b_1}\left(\sigma_{\omega-1}\sigma_{\omega-2}\cdots \sigma_{1}\right)^{t-b_1}$$
in the braid group $B_{\omega+1}$ on $\omega+1$ strands, where $1\le b_0\le \omega$ and $1\le b_1\le t$.
\end{theorem}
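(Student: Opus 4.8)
The plan is to start from the geometric model furnished by Theorem~\ref{main-description} and to braid it explicitly about the core of one of the two solid tori. Concretely, I would realize $S^3=U_0\cup_\Sigma U_1$ as the standard genus one splitting with $\Sigma$ the flat Clifford torus, fix flat coordinates $(x,y)\in\mathbb{R}^2/\mathbb{Z}^2$ so that $\{y=\mathrm{const}\}$ is a meridian $\mu_0$ of $U_0$ and $\{x=\mathrm{const}\}$ is a longitude, and take the geodesic $\rho$ to be a straight Euclidean segment of rational slope. The arcs $\tau_0,\tau_1$ then lie in meridional disks of $U_0,U_1$ respectively, so they project to chords across the two coordinate directions. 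The first step is to choose the braid axis to be the core $c_1$ of $U_1$ and to check that, after a small isotopy pushing $\rho$ off $\Sigma$, the knot $K=\rho\cup\tau_0\cup\tau_1$ is positively transverse to every meridional disk of the complementary solid torus $S^3\setminus N(c_1)$ except along the single bridge $\tau_0$, which lies inside one such disk (its boundary being $\mu_0$). The braid index is then the geometric winding number of $K$ about $c_1$, which I would compute from the slope of $\rho$ to equal $\omega+1$.

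The second step is to read off the braid word by slicing $S^3\setminus N(c_1)$ into meridional disks and recording the permutation and the crossings accumulated between consecutive slices. Because $\rho$ is a straight geodesic, its contribution is completely regular: each pass around the axis cyclically shifts the strands and contributes one block $\sigma_\omega\sigma_{\omega-1}\cdots\sigma_1$, a full shift of all $\omega+1$ strands, or $\sigma_{\omega-1}\cdots\sigma_1$, a shift of only $\omega$ strands. I expect the second bridge $\tau_1$, which dips into $U_1$ toward the axis, to be exactly the feature that toggles a full shift into a partial one: the slices on one side of $\tau_1$ see all $\omega+1$ strands while those on the other side see one strand pulled aside, producing the decomposition into $b_1$ factors of $B=\sigma_\omega\cdots\sigma_1$ followed by $t-b_1$ factors of $C=\sigma_{\omega-1}\cdots\sigma_1$, where $t$ is the total number of longitudinal wraps of $\rho$. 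The bridge $\tau_0$, being a chord spanning $b_0$ consecutive braid points within a single meridional disk, contributes the prefix $A=\sigma_\omega\cdots\sigma_{\omega-b_0+1}$.

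I would then establish positivity and the stated parameter ranges. Positivity of all crossings is where the hypothesis enters: by the criterion quoted from \cite{GLV}, a positive L-space knot corresponds to a coherent diagram with $\alpha\cdot\beta>0$, and I would translate the coherent orientation into the statement that every crossing read off in the previous step carries the same positive sign, so that the word lies in the positive monoid and no $\sigma_i^{-1}$ occurs. For the ranges, a genuine chord spans at least one and at most $\omega$ gaps, giving $1\le b_0\le\omega$, while the split of the $t$ wraps by $\tau_1$ satisfies $1\le b_1\le t$; I would also check that the degenerate endpoints of these ranges recover the two families of $1$-bridge braids, matching the remark following Theorem~\ref{main-description}.

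The main obstacle I anticipate is the precise bookkeeping in the second step: matching the endpoints of the two bridges to specific strand indices so that the word is \emph{exactly} $A\,B^{b_1}\,C^{t-b_1}$ rather than merely conjugate or Markov-equivalent to it, and simultaneously verifying that the partial shift $C$ omits precisely the strand that $\tau_1$ pulls aside. This hinges on a careful choice of the starting meridional disk and of orientation conventions, together with a check that the isotopy pushing $\rho$ off $\Sigma$ introduces no extra crossings of the wrong sign. Once the combinatorial dictionary between the slope and length of $\rho$, the spans of $\tau_0,\tau_1$, and the quadruple $(\omega,t,b_0,b_1)$ is pinned down, the identification of the braid is a direct, if intricate, computation.
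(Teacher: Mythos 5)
Your route is essentially the paper's: start from the model $K=\rho\cup\tau_0\cup\tau_1$ of Theorem \ref{main-description}, braid it about the core of $U_1$, read the prefix $\sigma_\omega\cdots\sigma_{\omega-b_0+1}$ off $\tau_0$, read the blocks $\sigma_\omega\cdots\sigma_1$ and $\sigma_{\omega-1}\cdots\sigma_1$ off the $t$ wraps of $\rho$ as separated by $\tau_1$, and get positivity of the crossings from the sign in the criterion of \cite{GLV}. On this last point the paper is more precise than you are: it orients $\rho$, observes that the induced orientations on the cores of $U_0$ and $U_1$ are either positively or negatively linked, and rules out the negative case because the construction of Subsection \ref{ss3} would then produce a negative coherent reduced diagram, making $K$ a negative L-space knot, contrary to hypothesis; this is the concrete form of your ``translate the coherent orientation into positive crossings'' step, and you would need some version of it.

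The genuine gap is in the parameter ranges. You derive $1\le b_0\le\omega$ from the assertion that ``a genuine chord spans at least one and at most $\omega$ gaps,'' and similarly $1\le b_1\le t$. But Theorem \ref{main-description} does not guarantee that the chords are genuine: $\tau_0$ or $\tau_1$ may have length zero (equivalently, span no strands of the braid), and this degenerate case actually occurs --- it is precisely the case of $1$-bridge braids, including every positive torus knot, all of which are nontrivial positive $(1,1)$ L-space knots. For these knots the geometric reading gives $b_0=0$ or $b_1=0$, outside your claimed range, so your argument as stated fails already on the trefoil. The stated bounds are not geometric facts but the outcome of a normalization, which is exactly what the paper supplies: if $b_0=b_1=0$ the closure has a split unknotted component, which is excluded; if $b_0=0$ and $b_1\ge 1$, absorb one full block as the prefix, replacing $(\omega,t,b_0,b_1)$ by $(\omega,t-1,\omega,b_1-1)$; if $b_1=0$ and $b_0\ge 1$, then $\sigma_\omega$ occurs exactly once in the word, so a Markov destabilization replaces $(\omega,t,b_0,b_1)$ by $(\omega-1,t,b_0-1,t)$; since each move strictly decreases $t+\omega$, a representation minimizing $t+\omega$ satisfies $1\le b_0\le\omega$ and $1\le b_1\le t$. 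With this step added (or, equivalently, with an isotopy perturbing a degenerate bridge into a genuine one before reading off the word), your proof closes and coincides with the paper's.
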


An example is shown in Figure \ref{fig:braid} below.

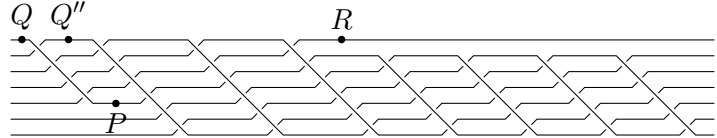
\begin{figure}[h]
\centering
\begin{tikzpicture}
    \coordinate[label = below:$P$] (P) at (1.4,0.42);

    \coordinate[label = $Q$] (Q) at (0.15,1.27);
    \coordinate[label = $Q''$] (Q2) at (0.77,1.27);

    \coordinate[label = $R$] (R) at (4.4,1.27);

\node at (P)[circle,fill,inner sep=1pt]{};
\node at (Q)[circle,fill,inner sep=1pt]{};
\node at (Q2)[circle,fill,inner sep=1pt]{};
\node at (R)[circle,fill,inner sep=1pt]{};
\pic[
rotate=90,
braid/.cd ,
gap=0.1,
control factor=0,
nudge factor=0,
height=-6pt,
width=6pt,
] { braid={
s_6^{-1} s_5^{-1} s_4^{-1} s_3^{-1}
s_6^{-1} s_5^{-1} s_4^{-1} s_3^{-1} s_2^{-1} s_1^{-1}
s_6^{-1} s_5^{-1} s_4^{-1} s_3^{-1} s_2^{-1} s_1^{-1}
s_6^{-1} s_5^{-1} s_4^{-1} s_3^{-1} s_2^{-1} s_1^{-1}
s_5^{-1} s_4^{-1} s_3^{-1} s_2^{-1} s_1^{-1}
s_5^{-1} s_4^{-1} s_3^{-1} s_2^{-1} s_1^{-1}
s_5^{-1} s_4^{-1} s_3^{-1} s_2^{-1} s_1^{-1}
s_5^{-1} s_4^{-1} s_3^{-1} s_2^{-1} s_1^{-1}
}};
\end{tikzpicture}
\caption{The braid when $(\omega,t,b_0,b_1)=(6,7,4,3)$.}\label{fig:braid}
\end{figure}

In \cite{Nie}, the author introduced the property (D) as follows.

\begin{definition}
For a nontrivial knot $K$ in $S^3$ with $\mu$ and $\lambda$ representing a meridian and a longitude in the knot group, we say $K$ has property (D) if 
\begin{enumerate}
    \item for any homomorphism $\rho$ from $\pi_1(S^3- K)$ to $\mbox{Homeo}_+(\mathbf{R})$, if $s\in\mathbf{R}$ is a common fixed point of $\rho(\mu)$ and $\rho(\lambda)$, then $s$ is a fixed point of every element in $\pi_1(S^3- K)$;
    \item $\mu$ is in the root-closed, conjugacy-closed submonoid generated by $\mu^{2g(K)-1}\lambda$ and $\mu^{-1}$.
\end{enumerate}
\end{definition}

The author proved that \cite[Theorem 1.3]{Nie} nontrivial knots which are closures of positive $1$-bridge braids have property (D). And by \cite[Theorem 4.1]{Nie}, it implies the non-left-orderability of the fundamental groups of the L-spaces obtained by Dehn surgeries on closures of $1$-bridge braids. In this paper, we prove the following result in a similar way. Thanks to the additional symmetry, our proof is simplified compared to the proof of \cite[Theorem 1.3]{Nie}.

\begin{theorem}\label{main-D}
Nontrivial positive $(1,1)$ L-space knots in $S^3$ have property (D).
\end{theorem}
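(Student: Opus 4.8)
The plan is to reduce everything to the explicit positive braid presentation furnished by Theorem~\ref{alternative-description}. Write $n=\omega+1$ for the number of strands and $\beta$ for the positive braid word; the closure $\hat\beta=K$ is a fibered knot, and $\pi_1(S^3-K)$ has the Wirtinger presentation $\langle x_1,\dots,x_n\mid x_i=\beta_\ast x_i\rangle$ in the strand meridians $x_i$, where $\beta_\ast$ is the Artin automorphism. I take $\mu=x_1$; since the braid permutation is an $n$-cycle, the $x_i$ form a single conjugacy class and generate the group. Before touching the dynamics I would record three structural facts. First, the full cycle $\delta=\sigma_\omega\sigma_{\omega-1}\cdots\sigma_1$ is a periodic (rotation) braid with $\delta^{\,n}$ the full twist, so it induces a \emph{cyclic symmetry} permuting the strand meridians; this symmetry is special to the $(1,1)$ braids of Theorem~\ref{alternative-description} and is absent for general $1$-bridge braids. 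Second, Seifert's algorithm on the positive braid yields a minimal-genus fiber surface with $2g(K)=e-\omega$, where $e$ is the exponent sum, so $2g(K)-1=e-\omega-1$, matching the minimal L-space slope of \cite{OS10}. Third, the Seifert longitude takes the form $\lambda=W\mu^{-e}$, where $W=\lambda\mu^{e}$ is a positive word of homological length $e$ that commutes with $\mu$ (being peripheral).

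For part (1) of property (D), let $\rho\colon\pi_1(S^3-K)\to\mbox{Homeo}_+(\mathbf{R})$ and let $s$ be a common fixed point of $\rho(\mu)$ and $\rho(\lambda)$; the goal is that every $x_i$, and hence the whole group, fixes $s$. Two elementary facts about $\mbox{Homeo}_+(\mathbf{R})$ power the argument: an orientation-preserving homeomorphism $f$ with $f^{k}(s)=s$ for some $k\ge1$ already satisfies $f(s)=s$ (iterating a strictly increasing map cannot return a moved point), and the positive cone $\{h:h\ge\mathrm{id}\}$ is preserved under conjugation. From $\rho(\mu)(s)=\rho(\lambda)(s)=s$ and $\lambda=W\mu^{-e}$ I first obtain $\rho(W)(s)=s$ for the positive word $W$. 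I then read $W$ through the crossings of the diagram and, applying the Wirtinger relation $x_k=x_jx_ix_j^{-1}$ at each positive crossing, propagate the property ``fixes $s$'' outward from $x_1=\mu$: the identity $\rho(W)(s)=s$ is exactly the consistency condition that closes the propagation once around the braid axis, and cone-preservation together with positivity forbids the propagation from stalling. The cyclic symmetry induced by $\delta$ lets me run this as a single pass rather than a strand-by-strand bookkeeping, and the $f^{k}(s)=s\Rightarrow f(s)=s$ principle converts the fixed relations into $\rho(x_i)(s)=s$ for all $i$.

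For part (2), I must place $\mu$ in the root-closed, conjugacy-closed submonoid generated by $a=\mu^{2g(K)-1}\lambda$ and $\mu^{-1}$. A short computation using $2g(K)-1=e-\omega-1$ and $\lambda=W\mu^{-e}$, together with the commutation of $W$ and $\mu$, gives $a=\mu^{-\omega-1}W$, an element of homology class $2g(K)-1$. Exploiting the symmetry, I would rewrite the positive word $W$ as a power of $\delta$ times band generators, so that, after absorbing a power of the available generator $\mu^{-1}$ and a conjugation, $a$ becomes a positive element whose natural root is $\mu$. Root-closure then extracts $\mu$ from the relevant power, and conjugacy-closure disposes of the conjugating factors. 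This is a finite, if intricate, manipulation in the braid group, kept short precisely by the $\delta$-symmetry.

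The main obstacle is the dynamical step in part (1): upgrading the algebraic fixed-point relations to the statement that the propagation reaches \emph{every} arc. This is where positivity is indispensable, since for an incoherent diagram the propagation can fail and the surgeries can be left-orderable. The coherence established in \cite{GLV}, reflected in the purely positive braid word of Theorem~\ref{alternative-description}, is what makes cone-preservation and the $f^{k}(s)=s$ principle combine into a complete propagation; and the cyclic symmetry of the $(1,1)$ braids collapses the strand-by-strand analysis of \cite{Nie} into a single symmetric pass, which is the promised simplification over the proof of \cite[Theorem 1.3]{Nie}.
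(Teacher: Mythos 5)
Your proposal has genuine gaps at exactly the two places where the paper's argument does real work, and the machinery you invoke does not fill them. For part (1), your propagation scheme never gets started. At a positive crossing the Wirtinger relation $x_k = x_j x_i x_j^{-1}$ lets you conclude $\rho(x_k)s=s$ only if you already know that \emph{both} $\rho(x_i)$ and $\rho(x_j)$ fix $s$; initially you know this for the single generator $x_1=\mu$, so there is nothing to propagate. The auxiliary facts you cite do not help: the cone $\{h: h\ge \mathrm{id}\}$ is preserved under conjugation, but the hypothesis available here is only the \emph{pointwise} condition at $s$, which is not conjugation-invariant; and $f^k(s)=s\Rightarrow f(s)=s$ never comes into play. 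What is really needed is a mechanism forcing all generators to move $s$ weakly in the \emph{same} direction, so that the fixed positive word $W=\lambda\mu^{e}$ (or $h_\omega g_1$ in the paper's notation) pins every generator to $s$. The paper manufactures this coherence from the $(1,1)$ structure, not from the braid word: it uses the symmetric four-generator presentation $x_0,y_0,x_1,y_1$ coming from the two handlebodies, where $\mu=x_0y_0^{-1}=y_1^{-1}x_1$ couples the signs within each pair, and then constructs a closed geodesic parallel to $\rho$ giving an element $g_0$ that is simultaneously a positive word in $x_0,y_0$ and a positive word in $x_1,y_1$, which couples the two pairs to each other. The paper's own remark flags precisely this construction as the nontrivial step; your sketch has no substitute for it, and without it the claim that ``positivity forbids the propagation from stalling'' is an assertion, not an argument.

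For part (2) the gap is larger still: ``rewrite $W$ as a power of $\delta$ times band generators \ldots so that $a$ becomes a positive element whose natural root is $\mu$'' is not a proof outline, and it conflates elements of the braid group ($\delta$) with elements of the knot group where the monoid lives. Root-closure lets you extract $\mu$ only after you have shown that some positive power of $\mu$ lies in the monoid, i.e.\ that $\mu^{n}\le_k 1$ for the preorder generated by $\mu$ and $(\mu^{2g(K)-1}\lambda)^{-1}$ -- and establishing that is the entire content of the lemma. The paper does it by a delicate induction that again depends on the $(1,1)$ decomposition rather than on positivity of the braid word alone: the relations $y_0=\prod_i g_i\mu g_i^{-1}$ and $y_1=\prod_i h_i^{-1}\mu h_i$, the prefix/suffix recursion $\tilde g_{i+1}=y_0\tilde g_i$ or $\mu y_0\tilde g_i$, and the intersection-count identities $\sum m_i=\omega-\omega'$, $\sum n_i=t-t'$, which combine to give $g_1\ge_k\mu^{\omega+t'-\omega'+1}$ and $h_\omega\ge_k\mu^{t-t'+\omega'+1}$, hence $\mu\le_k 1$ from $h_\omega g_1\le_k\mu^{t+\omega+1}$. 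None of these inputs is visible from the Wirtinger presentation you set up, so your plan would have to rediscover them in braid-theoretic form; as written, both halves of property (D) remain unproved.
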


Therefore, by \cite[Theorem 4.1]{Nie}, we have the following conclusion.

\begin{theorem}\label{non-left-orderable}
The fundamental group of an L-space obtained by Dehn surgery on a $(1,1)$-knot in $S^3$ is not left orderable.
\end{theorem}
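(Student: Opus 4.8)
The final statement, Theorem \ref{non-left-orderable}, follows immediately from the combination of Theorem \ref{main-D} and the cited result \cite[Theorem 4.1]{Nie}, so my plan is to reduce the assertion to these two inputs and verify that their hypotheses are met for every relevant surgery. The essential point is that Theorem \ref{main-D} establishes property (D) only for \emph{nontrivial positive} $(1,1)$ L-space knots, whereas the statement to be proved concerns an L-space obtained by Dehn surgery on an \emph{arbitrary} $(1,1)$-knot in $S^3$. The first step is therefore to argue that any such L-space surgery is in fact a surgery on a positive (or negative) L-space knot, after which the machinery applies directly.

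The plan proceeds as follows. First I would observe that if Dehn surgery on a knot $K$ yields an L-space, then $K$ is by definition an L-space knot, and being a $(1,1)$-knot it admits a reduced $(1,1)$-diagram, which by the Greene--Lewallen--Vafaee criterion (\emph{Previous Work}, \cite[Theorem 1.2]{GLV}) must be coherent and hence represents either a positive or a negative L-space knot. Second, I would dispose of the trivial and the negative cases: the unknot has only lens-space surgeries whose fundamental groups are finite cyclic and hence non-left-orderable (a finite nontrivial group is never left-orderable), and a negative L-space knot is the mirror of a positive one, so up to reversing orientation of $S^3$ we may assume the knot is a \emph{nontrivial positive} $(1,1)$ L-space knot. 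Third, with $K$ now in this class, Theorem \ref{main-D} grants that $K$ has property (D). Finally, I would invoke \cite[Theorem 4.1]{Nie}, which asserts precisely that property (D) forces the fundamental group of any L-space obtained by Dehn surgery on $K$ to be non-left-orderable, completing the argument.

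I do not expect a serious obstacle here, since the theorem is essentially a bookkeeping corollary assembling results already in hand; the only care required is in the case analysis. The one point worth treating carefully is the passage to the positive case via mirroring: I would note that left-orderability of a group is invariant under passing to the opposite group and under orientation reversal of the ambient manifold, so that non-left-orderability for the surgered manifold on the positive knot transfers to the corresponding surgery on its mirror. A second minor point is confirming that property (D) as stated, together with \cite[Theorem 4.1]{Nie}, covers \emph{all} L-space slopes and not merely a single one; this is guaranteed because \cite[Theorem 4.1]{Nie} is phrased for arbitrary Dehn surgeries yielding an L-space, and the second clause of property (D) is formulated uniformly in terms of the monoid generated by $\mu^{2g(K)-1}\lambda$ and $\mu^{-1}$, which controls exactly the admissible L-space slopes $\tfrac{p}{q}\ge 2g(K)-1$.
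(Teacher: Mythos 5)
Your proposal is correct and takes the same route as the paper: the paper's entire proof is the one-line observation that Theorem \ref{main-D} together with \cite[Theorem 4.1]{Nie} yields the result. The reduction steps you spell out---disposing of the unknot via lens-space fundamental groups, and passing from negative to positive $(1,1)$ L-space knots by mirroring (using the Greene--Lewallen--Vafaee coherence criterion to know the knot is positive or negative)---are exactly the bookkeeping the paper leaves implicit, and your treatment of them is sound.
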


Because a $(1,1)$-decomposition eases the computation of knot Floer homology, many examples of L-space knots which were studied in the literature are $(1,1)$-knots. Theorem \ref{non-left-orderable} serves as the generalization of relevant non-left-orderability results \cite{Chr16, Clay13, Ichi15, Ichi18,Jun,Liang, Nakae,Nie19,Nie, Tran19,Tran20}.

\section*{Acknowledgement}
The author thanks Fan Ye for helpful discussions.

	\section{Description of $(1,1)$ L-space knots}
	
	This section is dedicated to prove Theorem \ref{main-description}.

	Let $(\Sigma, \alpha, \beta,w,z)$ denote a reduced $(1,1)$-diagram representing an L-space knot in the three-sphere or lens space $Y$. Since the $\alpha$ curve is simple, it represents a primitive element in $H_1(\Sigma)$. We can straighten out the $\alpha$ curve via a self-homeomorphism of $\Sigma$. Assume that $\alpha$ curve is horizontal, with an orientation from left to right. The $\beta$ curve is cut into strands of two bands and two rainbows by the $\alpha$ curve. By \cite[Theorem 1.2]{GLV}, we can choose an orientation of $\beta$ which induces an orientation from left to right on every rainbow strand, which is the opposite of the coherent orientation. Assume that $w$ is on the left side of $\alpha$, and $z$ is on the right side of $\alpha$.
	
	\subsection{The first step}
	
	We define the positive curves on the torus $\Sigma$ as follows.

	\begin{definition}\label{positive}
	An oriented curve $\gamma$ on $\Sigma$ is called positive, if at each inner intersection point of $\gamma$ and $\alpha\cup \beta$, the $\gamma$ curve goes from the right side of the $\alpha$ or $\beta$ curve to the left side transversally.
	\end{definition}

	Our first step is to construct a positive curve $\gamma_0$ connecting $w$ to $z$. 

	Let $S$ be the set of endpoints of all positive curves originating from $w$. If $z \in S$, our first step is completed. Otherwise, we assume $z\not\in S$. Let $S_w$ be the connected component of $S-\alpha$ containing $w$. Since $w\in S_w$, each point on a rainbow strand around $w$ is an interior point of $S_w$. Since $z\not \in S$, each point on a rainbow strand around $z$ is in the exterior of $S_w$. Therefore, the shape of $S_w$ is a rectangle. Let the vertices of $S_w$ be $P_1,P_2,P_3,P_4$ counterclockwise, with $P_1P_2, P_4P_3$ being parts of the $\alpha$ curve, $P_2P_3, P_4P_1$ being parts of the $\beta$ curve, and the basepoint $w$ being close to the edge $P_1P_2$. From the definition of $S_w$, we can derive the orientation of $P_4P_1$ and $P_2P_3$ as shown in Figure \ref{fig:S_w}.

\begin{figure}[h]
\centering
\includegraphics[width=2in]{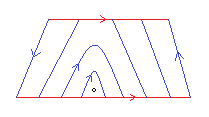}
\put(-74,22){$w$}
\put(-147,5){$P_1$}
\put(-10,5){$P_2$}
\put(-31,70){ $P_3$}
\put(-127,70){ $P_4$}
\caption{The rectangle $S_w$.}\label{fig:S_w}
\end{figure}

	If there exists another embedded open rectangle $P_4P_3P_5P_6$ on the left of $P_4P_3$ which does not contain any basepoints, then we replace $P_1P_2P_3P_4$ by the immersed rectangle $P_1P_2P_5P_6$ and try the same extension again. Because the $\beta$ curve is connected, the edge $P_2P_3$ extends to the right strand of the basepoint $z$ in finite steps. Hence, we assume that the sequence of extensions ends at an immersed rectangle $P_1P_2Q_2Q_1$, as shown in Figure \ref{fig:ppqq}.

\begin{figure}[h]
\centering
\includegraphics[width=2in]{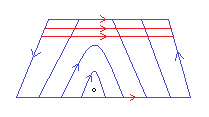}
\put(-74,22){$w$}
\put(-147,5){$P_1$}
\put(-10,5){$P_2$}
\put(-31,70){ $Q_2$}
\put(-127,70){ $Q_1$}
\put(-25,52){ $P_3$}
\put(-135,52){ $P_4$}
\caption{The immersed rectangle $P_1P_2Q_2Q_1$.}\label{fig:ppqq}
\end{figure}

	There are two possibilities for not able to extend the immersed rectangle: one of the edge $P_1Q_1$ and $P_2Q_2$ extends to a rainbow strand, or the embedded rectangle on the left of $Q_1Q_2$ contains at least one basepoint. If the embedded rectangle on the left of $Q_1Q_2$ contains the basepoint $z$. Then by the definition of $S$, we have $z\in S$. Otherwise, the edge $Q_1Q_2$ intersects with the edge $P_1P_2$ on the torus $\Sigma$.

	By the definition of $S_w$, the strands $P_4P_1$ and $P_2P_3$ are on the boundary of $S$, so we have $Q_1 Q_2 \subseteq P_1P_2$. If $P_1=Q_1$ or $P_2=Q_2$, then the edge $Q_1P_1$ or the edge $P_2Q_2$ covers the $\beta$ curve. In that case, $Q_1P_1$ or $P_2Q_2$ contains the left and right strand of the basepoint $z$, which is a contradiction. Therefore, the edge $Q_1Q_2$ lies in the interior of the edge $P_1P_2$.

	Suppose that there are $q$ rainbow strands in the middle, $r_1\ge 1$ band strands on the left (including $Q_1P_1$) and $r_2\ge1$ band strands on the right (including $P_2Q_2$) in the immersed rectangle $P_1P_2 Q_2 Q_1$. Suppose that the $i$-th intersection point on $Q_1Q_2$ is the $(i+k)$-th intersection point on $P_1P_2$ for $1\le i\le r_1+r_2$. Then we have $1\le k\le 2q-1$.

	For $1\le i \le 2q+r_1+r_2$, let $\varepsilon_i=1$ if the $\beta$ curve goes from the right side of the $\alpha$ curve to the left side at the $i$-th intersection point on $P_1P_2$. Otherwise, let $\varepsilon_i=-1$. Then we have 
$$
\varepsilon_i= 
\begin{cases}
\varepsilon_{i+k} &\mbox{, if }1\le i \le r_1;\\
1              &\mbox{, if }r_1+1\le i\le r_1+q;\\
-1             &\mbox{, if }r_1+q+1\le i\le r_1+2q;\\
\varepsilon_{i-2q+k} &\mbox{, if }2q+r_1+1\le i \le 2q+r_1+r_2.
\end{cases}
$$
If $1\le k\le q$, then we have $\varepsilon_1=1$ by induction. If $q+1\le k\le 2q-1$, then we have $\varepsilon_{2q+r_1+r_2}=-1$ by induction. Either case leads to a contradiction.
	
	\subsection{The second step}

	We have constructed a positive curve $\gamma_0$ connecting $w$ to $z$. By eliminating self-loops, we assume that $\gamma_0$ is simple and intersects each connected component of $\Sigma-\alpha-\beta$ at most once. Our second step is to construct a positive simple closed curve $\gamma$ passing through $w$ and $z$. 

	Let $T_1, T_2,\ldots, T_p$ be all intersection points between the $\alpha$ curve and the $\beta$ curve, ordered along the orientation of $\alpha$. Via a self-homeomorphism of $\Sigma$, we assume the following condition: for $1\le i\le p$, if the $\alpha$-segment $T_i T_{i+1}$ does not intersect with the $\gamma_0$ curve, then it has unit length; otherwise, it has length $2q+1$, where $q$ is the number of strands in each rainbow. 

	For a downward-oriented band strand $e_1$ and an upward-oriented band strand $e_2$ on the $\beta$ curve, there exists an embedded open rectangle $R$ with two edges being $e_1$ and $e_2$ and the other two edges $e_3,e_4$ on the $\alpha$ curve. We can further assume that the rectangle is on the left of $e_1, e_2, e_3$ and on the right of $e_4$. 

	Let $l_i$ denote the length of $e_i$ for $i=3,4$, then $$l_i=\left|e_i\cap \beta \right|+ 2q \left |e_i\cap\gamma_0 \right|-1.$$ The difference $\left|e_4\cap \beta\right|-\left|e_3\cap \beta\right|$ depends on whether each basepoint lies in $R$, that is, $$\left|e_4\cap \beta\right|-\left|e_3\cap \beta\right|=2q\left|\{z\}\cap R\right|-2q\left|\{w\}\cap R\right|.$$ The difference $\left|e_4\cap \gamma_0\right|-\left|e_3\cap \gamma_0\right|$ depends on how $\gamma_0$ intersects $R$. Since $\gamma_0$ is positive, if it intersects $e_1$, $e_2$ or $e_3$ at a point, then it enters $R$ there; if it intersects $e_4$ at a point, then it exits $R$ there. Hence we have $$ \left|e_4\cap \gamma_0\right|-\left|e_3\cap \gamma_0\right|=\left|e_1\cap \gamma_0\right|+\left|e_2\cap \gamma_0\right| +\left|\{w\}\cap R\right|-\left|\{z\}\cap R\right|.$$ By combining these equations, we get $l_3\le l_4$.

	Therefore, there exists a linear foliation $\mathcal{F}$ of the torus $\Sigma$, such that, up to isotopy, each strand of the $\beta$ curve is contained in a leaf of $\mathcal{F}$ or transverse to $\mathcal{F}$ in a fixed direction. Via another isotopy, we can assume that the entire $\beta$ curve is either contained in a leaf of $\mathcal{F}$ or transverse to $\mathcal{F}$. In either case, we can assume that the slope of $\mathcal{F}$ is irrational under a perturbation of the foliation, so the leaves of $\mathcal{F}$ are dense. We extend the curve in both directions from the basepoint $w$ along a leaf of $\mathcal{F}$ until the endpoints reach the connected component of $\Sigma-\alpha-\beta$ containing the basepoint $z$. After closing the curve by connecting two endpoints within the connected component, we get the positive simple closed curve $\gamma$ passing through $w$ and $z$. 
	
	\subsection{The third and the last steps}\label{ss3}
	Our third step is to complete the proof of the ``only if'' part of Theorem \ref{main-description}. Via a self-homeomorphism of $\Sigma$, we abandon the horizontality of the $\alpha$ curve, and assume that the $\gamma$ curve is horizontal instead. Since $\gamma$ is positive, we can either assume $\alpha$ is a geodesic or assume $\beta$ is a geodesic, but not simultaneously. In fact, there exists an isotopy $f:(\alpha\cup \beta\cup \gamma)\times [0,1]\to \Sigma$, such that $f(z,t), f(\gamma,t)$ are independent of $t$, and $f(\alpha,0), f(\beta,1), f(\gamma,0)$ are geodesics. Let $\rho$ be the curve in $\Sigma\times[0,1]$ defined by $\rho(t)=(f(w,t),t)$. Let $\tau_0$ (resp. $\tau_1$) be a geodesic in $(\Sigma,0)$ (resp. $(\Sigma,1)$) which does not intersect with $(f(\alpha,0),0)$ (resp. $(f(\beta,1),1)$). After attaching the solid tori $U_0$ and $U_1$ to the boundary components of $\Sigma\times[0,1]$, such that $(f(\alpha,0),0)$ (resp. $(f(\beta,1),1)$) is a meridional disk of $U_0$ (resp. $U_1$), we recover the knot $\rho\cup \tau_0 \cup \tau_1$ from the $(1,1)$-diagram.

	At last, the ``if'' part of Theorem \ref{main-description} can be proved in a way similar to the $1$-bridge braid case. In \cite[Section 3]{GLV}, a coherent reduced $(1,1)$-diagram was constructed for each $1$-bridge braid in $S^3$ and lens space to utilize \cite[Theorem 1.2]{GLV}, as shown in Figure \ref{fig:GLV}. We make a tiny change in the construction here: the basepoint $z$ is no longer restricted to be the starting point of the $\gamma'$, but can be any point in $\Sigma-\alpha-\beta$. The topological meaning of the diagram is as explained in the previous paragraph: if we move the basepoint $w$ along a geodesic $\gamma'$, we can untwist the $\beta$ curve at the expense of twisting the $\alpha$ curve. With this change, a $(1,1)$-diagram as the middle one in Figure \ref{fig:GLV} can represent the knot described in Theorem \ref{main-description}. This $(1,1)$-diagram is coherent in the sense that certain orientations on the $\alpha$ and $\beta$ curves induce coherent orientations on the boundary of every embedded bigon $(D,\partial D)\subseteq (\Sigma, \alpha \cup \beta)$. Each isotopy to remove an empty bigon preserves the coherence, so we get a reduced $(1,1)$-diagram in finite steps. By \cite[Theorem 1.2]{GLV}, we proved the ``if'' part of Theorem \ref{main-description}.

\begin{figure}[h]
\centering
\includegraphics[width=5in]{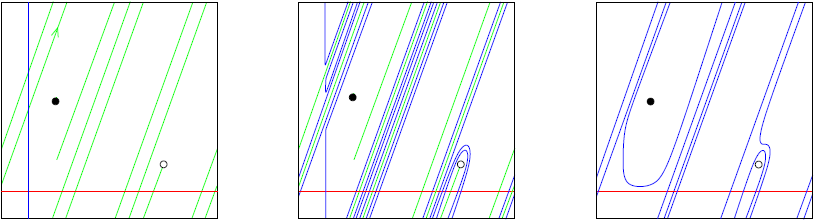}
\put(-342,57){$z$}
\put(-286,26){$w$}
\put(-370,10){\color{red} $\alpha$}
\put(-325,80){$\gamma'$}
\put(-353,-10){\color{blue} $\beta_0$}
\put(-221,-10){\color{blue} $\beta_1$}
\put(-74,-10){\color{blue} $\beta_2$}
\caption{The construction of a coherent diagram of the $1$-bridge braid $K(-2,3,7)$ in $S^3$, modified from \cite[Figure 3]{GLV}.}\label{fig:GLV}
\end{figure}

\section{Non-left-orderable surgeries}
\subsection{A positive braid representation}
In this subsection, we prove Theorem \ref{alternative-description} and derive the genus formula.

Let $K$ be a nontrivial positive $(1,1)$ L-space knot in $S^3$. Let $S^3=U_0\cup_{\Sigma} U_1$ be a genus one Heegaard splitting with standard geometry. By Theorem \ref{main-description}, $K$ is isotopic to $\rho \cup \tau_0 \cup \tau_1$, where $\rho$ is a geodesic of $\Sigma$, and $\tau_0$ (resp. $\tau_1$) is properly embedded in some meridional disk of $U_0$ (resp. $U_1$). 

An orientation on the geodesic $\rho$ induces orientations on the cores of $U_0$ and $U_1$. If the cores of $U_0$ and $U_1$ are negatively linked, then the construction in Subsection \ref{ss3} yields a negative coherent reduced $(1,1)$-diagram. By \cite[Theorem 1.2]{GLV}, $K$ is a negative $(1,1)$ L-space knot, which contradicts the assumption that $K$ is a nontrivial positive $(1,1)$ L-space knot in $S^3$. Thus, the cores of $U_0$ and $U_1$ with induced orientations are positively linked. So $\rho$ can be realized as a part of a positive braid. After appending the arcs $\tau_0$ and $\tau_1$, we get a positive braid as shown in Figure \ref{fig:braid}.

Let $K$ be the closure of the positive braid represented by $$\left (\sigma_{\omega} \sigma_{\omega-1}\cdots \sigma_{\omega-b_0+1} \right)\left(\sigma_{\omega}\sigma_{\omega-1}\cdots \sigma_{1}\right)^{b_1}\left(\sigma_{\omega-1}\sigma_{\omega-2}\cdots \sigma_{1}\right)^{t-b_1}.$$
If $b_0=b_1=0$, then $K$ has an unknot component, which is not allowed. If $b_0=0$, we can decrease $t$ and $b_1$ by one and set $b_0$ to $\omega$. If $b_1=0$, we can decrease $\omega$ and $b_0$ by one and set $b_1$ to $t$. For the representation with minimal $t+\omega$, we have $1\le b_0\le \omega$ and $1\le b_1\le t$. Therefore, Theorem \ref{alternative-description} holds.

A minimal genus Seifert surface is obtained \cite{Cromwell} by applying Seifert's algorithm to a positive diagram, so the genus of $K$ is
\begin{align*}
g(K)&=\frac{1}{2}(\mbox{\#crossings}-\mbox{\#strands}+1)\\
&=\frac{1}{2}(b_0+b_1\omega +(t-b_1)(\omega-1)- (\omega+1)+1)\\
&=\frac{1}{2}(t \omega-t-\omega+b_0+b_1).
\end{align*}

\subsection{The knot group}

In this subsection, we investigate the knot group $\pi_1(S^3-K)$. As a $(1,1)$-knot, the knot group has a $2$-generator presentation. However, to keep the symmetry, we specify four elements $x_0,y_0,x_1,y_1$ in the knot group instead. 

Let $D_0$ (resp. $D_1$) be the meridional disk of $U_0$ (resp. $U_1$) containing $\tau_0$ (resp. $\tau_1$). Then $D_0$ (resp. $D_1$) is divided by $\tau_0$ (resp. $\tau_1$) into two disks $D_{x,0}$ and $D_{y,0}$ (resp. $D_{x,1}$ and $D_{y,1}$). Let the points $P, Q, R$ on $\Sigma$ be $\rho\cap \tau_0, \tau_0\cap \tau_1, \tau_1\cap \rho$, respectively. Orient the knot $K$ so that $P, Q, R$ appears in order. Orient the cores of $U_0, U_1$ and the disks $D_0,D_1,D_{x,0},D_{y,0},D_{x,1},D_{y,1}$ accordingly. Let $Q'$ be a point near $Q$ in $\Sigma-\rho-D_0-D_1$, so that $Q'$ is on the negative side of $D_0$ and on the positive side of $D_1$. Let $Q''$ be a point in $\rho$ on the boundary of the connected component of $\Sigma-\rho-D_0-D_1$ containing $Q'$, as shown in Figure \ref{fig:braid}.

The fundamental group of $U_0- \tau_0$ (resp. $U_1-\tau_1$) is freely generated by two elements $x_0,y_0$ (resp. $x_1,y_1$), where $x_0$ (resp. $y_0,x_1,y_1$) is represented by a loop based at $Q'$ intersecting $D_{x,0}$ (resp. $D_{y,0},D_{x,1},D_{y,1}$) once positively and not intersecting other disks. Then $\pi_1(S^3-K)$ based at $Q'$ is generated by $x_0,y_0,x_1,y_1$.

Without loss of generality, we assume $x_0$ (resp. $x_1$) has larger norm than $y_0$ (resp. $y_1$) in $H_1(S^3-K)$. Then $$\mu = x_0 y_0^{-1}=y_1^{-1}x_1$$
represents a meridian of $K$ around $Q$.

The boundary of $D_1$ (resp. $D_0$) intersects $\rho$ in $t$ (resp. $\omega$) points, not counting $P$, $Q$ and $R$. Starting from $Q$ along positive direction, let the points be $R_t,R_{t-1},\ldots, R_1$ (resp. $P_\omega, P_{\omega-1}, \ldots, P_1$) in order. For each $i$ with $1\le i\le t$ (resp. $1\le i\le \omega$), let $g_i$ (resp. $h_i^{-1}$) represent the loop based at $Q'$ in $U_0-\tau_0$ (resp. $U_1-\tau_1$) which first travels to $R_i$ (resp. $P_i$) without intersecting $D_0$ (resp. $D_1$), then follows $\rho$ in positive (resp. negative) direction to $P$ (resp. $R$) but not past it, lastly travels back to $Q'$ without intersecting $D_0$ (resp. $D_1$). Then each $g_i$ (resp. $h_i$) represented by a word in $x_0$ and $y_0$ (resp. $x_1$ and $y_1$), and we have 
\begin{align*}
y_0&=\left(g_1\mu g_1^{-1} \right)\left(g_2\mu g_2^{-1} \right)\cdots \left(g_t \mu g_t^{-1}\right),\\
y_1&=\left(h_1^{-1}\mu h_1 \right)\left(h_2^{-1}\mu h_2 \right)\cdots \left(h_\omega^{-1} \mu h_\omega\right).
\end{align*}
Since $b_0,b_1\neq 0$, the point $Q''$ is on the arc $R_1 P_\omega\subset \rho$ which is a part of boundary of the connected component of $\Sigma-\rho-D_0-D_1$ containing $Q'$. The longitude $\lambda$ of $K$ starting from $Q$ is determined by
$$\mu^{k_0} \lambda= h_\omega g_1.$$
The integer $k_0$ can be found by counting the crossings between $K$ and a loop represented by $h_\omega g_1$ on a planar diagram. The loop represented by $h_\omega g_1$ differs from the blackboard framing of $K$ as shown in Figure \ref{fig:braid} by $2t$ additional positive crossings, so we have
\begin{align*}
k_0&=\mbox{\#crossings}+t\\
   &= b_0+b_1\omega +(t-b_1)(\omega-1)+t\\ 
   &= t \omega+b_0+b_1.
\end{align*}
Therefore we have 
$$\mu^{t \omega+b_0+b_1} \lambda= h_\omega g_1,$$
and
$$\mu^{2g(K)-1} \lambda= h_\omega g_1 \mu^{-t-\omega -1 }.$$

Furthermore, the word representing $g_1$ starts with an $x_0$, and the word representing $h_\omega$ ends with an $x_1$.

\subsection{The property (D)}
In this subsection, we prove that $K$ has property (D).

The first part of the property (D) is the following.

\begin{lemma}\label{D-1}
For any homomorphism $\rho$ from $\pi_1(S^3- K)$ to $\mbox{Homeo}_+(\mathbf{R})$, if $s\in\mathbf{R}$ is a common fixed point of $\rho(\mu)$ and $\rho(\lambda)$, then $s$ is a fixed point of every element in $\pi_1(S^3- K)$.
\end{lemma}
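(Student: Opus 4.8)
The plan is to show that the stabilizer $G_s=\{g\in\pi_1(S^3-K):\rho(g)(s)=s\}$, which contains $\mu$ and $\lambda$ by hypothesis, is all of $\pi_1(S^3-K)$; since $x_0,y_0,x_1,y_1$ generate, it suffices to put these four meridional loops in $G_s$. The workhorse will be the following elementary monotonicity principle for $\mathrm{Homeo}_+(\mathbf{R})$: if $f_1,\dots,f_n\in\mathrm{Homeo}_+(\mathbf{R})$ each satisfy $f_i(s)\ge s$ and $f_1\cdots f_n(s)=s$, then $f_i(s)=s$ for every $i$ (and symmetrically with $\le$). This is proved by peeling off the leftmost factor and applying the two–term case $fg(s)=s,\ f(s)\ge s,\ g(s)\ge s\Rightarrow f(s)=g(s)=s$, using that every suffix product again satisfies $(f_k\cdots f_n)(s)\ge s$, because $f_k$ is increasing and $(f_{k+1}\cdots f_n)(s)\ge s$.

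First I would extract one clean consequence of the hypothesis. Since $\rho(\mu)$ and $\rho(\lambda)$ fix $s$, the relation $\mu^{2g(K)-1}\lambda=h_\omega g_1\mu^{-t-\omega-1}$ derived above yields $\rho(h_\omega g_1)(s)=s$. The aim is then to feed the word $h_\omega g_1$ into the monotonicity principle, for which I must first pin down the displacement signs of the generators. The relation $\mu=x_0y_0^{-1}$ together with $\rho(\mu)(s)=s$ forces $\rho(x_0)^{-1}(s)=\rho(y_0)^{-1}(s)$, so $x_0$ and $y_0$ move $s$ strictly to the same side (or both fix it); likewise $\mu=y_1^{-1}x_1$ makes $x_1$ and $y_1$ move $s$ to a common side. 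Up to conjugating the entire action by the reflection $x\mapsto-x$ — which preserves $\mathrm{Homeo}_+(\mathbf{R})$, carries the common fixed point to $-s$, and reverses every displacement — I may assume $\rho(x_0)(s)\ge s$.

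Granting that all four generators push $s$ weakly upward, the conclusion is immediate: $g_1$ and $h_\omega$ are (positive) words in $\{x_0,y_0\}$ and $\{x_1,y_1\}$ respectively, so $h_\omega g_1$ is a product of letters each fixing $s$ from below, and since it fixes $s$ the monotonicity principle forces every letter to fix $s$. In particular the leading letter $x_0$ of $g_1$ and the trailing letter $x_1$ of $h_\omega$ satisfy $\rho(x_0)(s)=\rho(x_1)(s)=s$; then $\mu=x_0y_0^{-1}=y_1^{-1}x_1$ together with $\rho(\mu)(s)=s$ gives $\rho(y_0)(s)=\rho(y_1)(s)=s$ as well. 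Hence all generators fix $s$ and $G_s=\pi_1(S^3-K)$, which is the assertion.

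The hard part will be the step I glossed over in the previous paragraph: ruling out the mixed configuration in which the $U_0$–pair and the $U_1$–pair displace $s$ to \emph{opposite} sides. Within each pair consistency is automatic, as shown, but in the mixed case the letters of $h_\omega g_1$ point in both directions and the monotonicity principle no longer applies (indeed $h_\omega$ could carry $g_1(s)>s$ back down to $s$ without any contradiction at this level). Closing this case seems to require the finer combinatorics of the words $g_i$ and $h_i$ encoded in the conjugate-product expressions $y_0=\prod_i g_i\mu g_i^{-1}$ and $y_1=\prod_i h_i^{-1}\mu h_i$, and I expect the symmetry exchanging the two sides — the simplification this paper advertises over \cite{Nie} — to be exactly what eliminates it. Concretely, I would try to use this symmetry together with the companion longitudinal relation to show directly that $g_1(s)$ and $h_\omega(s)$ lie on the same side of $s$, which is all the monotonicity principle ultimately needs.
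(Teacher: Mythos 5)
Your proposal reproduces the routine half of the paper's proof --- the monotonicity principle in $\mbox{Homeo}_+(\mathbf{R})$, the observation that $\mu=x_0y_0^{-1}=y_1^{-1}x_1$ forces each pair $\{x_0,y_0\}$ and $\{x_1,y_1\}$ to displace $s$ to a common side, the normalization $\rho(x_0)s\ge s$, and the final step of feeding $h_\omega g_1$ (which fixes $s$ by the longitude relation) into the monotonicity principle. But the case you explicitly leave open, the mixed configuration in which $x_0,y_0$ push $s$ weakly up while $x_1,y_1$ push it strictly down, is not a technical loose end: it is the actual content of the lemma, and your sketched route for closing it does not work as stated. None of the relations you have in hand --- the two expressions for $\mu$, the conjugate-product formulas for $y_0$ and $y_1$, and $\mu^{2g(K)-1}\lambda=h_\omega g_1\mu^{-t-\omega-1}$ --- couples the displacement sign of the $U_0$-pair to that of the $U_1$-pair: each relation either stays within one pair, or (like $h_\omega g_1$) mixes letters of both alleged signs, exactly the situation in which the monotonicity principle is silent. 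So ``symmetry plus the companion longitudinal relation'' cannot by itself show that $g_1(s)$ and $h_\omega(s)$ lie on the same side of $s$.

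What the paper does to kill the mixed case is introduce one additional, genuinely topological ingredient that your proposal lacks: a new group element $g_0$, obtained by closing up a geodesic on $\Sigma-\rho$ parallel to $\rho$, chosen (using nontriviality of $K$, so that $\rho$ is parallel to neither $\partial D_0$ nor $\partial D_1$) to cross each of the disks $D_0$ and $D_1$ at least once. Because this closed curve can be pushed into either handlebody, the single element $g_0$ is simultaneously a nontrivial \emph{positive} word in $x_0,y_0$ and a nontrivial \emph{positive} word in $x_1,y_1$ (positivity holds because a geodesic crosses each disk always with the same sign). The first expression gives $\rho(g_0)s\ge s$; if $x_1$ and $y_1$ both moved $s$ strictly downward, the second expression would give $\rho(g_0)s<s$, a contradiction. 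This is precisely the element whose existence the paper's remark after the lemma flags as ``highly nontrivial'' to produce algebraically and better done topologically via Theorem \ref{main-description}; without it, or some equivalent bridge between the two generator pairs, your argument does not close.
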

\begin{proof}
Since $s$ is a common fixed point of $\rho(\mu)$ and $\rho(\lambda)$, it is a common fixed point of $\rho(x_0 y_0^{-1})$, $\rho(y_1^{-1}x_1)$ and $\rho(h_\omega g_1)$. Without loss of generality, we assume $\rho(x_0)s\ge s$, then we have $\rho(y_0)s\ge s$. We also have $\rho(x_1)s\ge s$ (resp. $\rho(x_1)s\le s$) if and only if $\rho(y_1)s\ge s$ (resp. $\rho(y_1)s\le s$).

Starting from the base point $Q'$, we construct a geodesic $\gamma$ on $\Sigma-\rho$ parallel to $\rho$. Because $K$ is nontrivial, the arc $\rho$ is not parallel to $\partial D_0$ or $\partial D_1$. Extend $\gamma$  until it crosses each disk $D_0, D_1$ at least once and reaches the connected component of $\Sigma-\rho-D_0-D_1$ containing $Q'$ again. Then we close up the curve to obtain the knot group element $g_0$, which can be represented by a nontrivial word in $x_0$ and $y_0$, and also by a nontrivial word in $x_1$ and $y_1$. By the first condition, we have $\rho(g_0)s\ge s$. By the second condition, we have $\rho(x_1)s\ge s$ and $\rho(y_1)s \ge s$. Because $h_\omega g_1$ is represented by a word in $x_0,y_0,x_1,y_1$ with at least one $x_0$ and one $x_1$, we have $\rho(x_0)s=\rho(y_0)s =\rho(x_1) s =\rho(y_1)s= s$. Therefore $s$ is a fixed point of every element in $\pi_1(S^3- K)$.
\end{proof}

\begin{remark}
This lemma is the starting point of our research. Suppose we are given a $(1,1)$ L-space knot in the form of a coherent diagram. Then the knot group has two generators $x_0,y_0$ and one relation. To prove this lemma, we have to construct another element such as $y_1$ in terms of $x_0$ and $y_0$. The algebraic construction is highly nontrivial, and better to be done topologically as Theorem \ref{main-description}.
\end{remark}

Then we prove the second part of the property (D).

\begin{lemma}\label{D-2}
The element $\mu$ is in the root-closed, conjugacy-closed submonoid generated by $\mu^{2g(K)-1}\lambda$ and $\mu^{-1}$.
\end{lemma}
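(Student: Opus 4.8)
The plan is to work entirely inside the knot group and to assemble $\mu$ from the two declared generators $\mu^{2g(K)-1}\lambda$ and $\mu^{-1}$ by applying the three closure operations in turn. Write $M$ for the submonoid in question, set $g=g(K)$, and record first the cheap memberships. As $M$ is a submonoid, $\mu^{-k}\in M$ for all $k\ge 0$, and as $M$ is conjugacy-closed every meridian inverse $w\mu^{-1}w^{-1}$ lies in $M$. Reading the two displayed product formulas backwards,
$$y_0^{-1}=\bigl(g_t\mu^{-1}g_t^{-1}\bigr)\cdots\bigl(g_1\mu^{-1}g_1^{-1}\bigr),\qquad y_1^{-1}=\bigl(h_\omega^{-1}\mu^{-1}h_\omega\bigr)\cdots\bigl(h_1^{-1}\mu^{-1}h_1\bigr),$$
exhibits $y_0^{-1}$ and $y_1^{-1}$ as products of conjugates of $\mu^{-1}$, so $y_0^{-1},y_1^{-1}\in M$. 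The meridian relations $\mu=x_0y_0^{-1}=y_1^{-1}x_1$ then yield $x_0^{-1}=y_0^{-1}\mu^{-1}\in M$ and $x_1^{-1}=\mu^{-1}y_1^{-1}\in M$; hence $\ell^{-1}\in M$ for every $\ell\in\{x_0,y_0,x_1,y_1\}$, and so is every conjugate of every positive word in these four inverses.

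Next I would bring in the longitude through $\mu^{2g-1}\lambda=h_\omega g_1\mu^{-(t+\omega+1)}$, whose left-hand side is the distinguished generator $a:=\mu^{2g-1}\lambda\in M$, and strip the positive word $h_\omega g_1$ from both ends simultaneously. Since $h_\omega=h_\omega'x_1$ ends in $x_1$ and $g_1=x_0g_1'$ begins with $x_0$, the word $x_1^{-1}x_0^{-1}$ lies in $M$ by the previous paragraph, and so does its conjugate $h_\omega(x_1^{-1}x_0^{-1})h_\omega^{-1}$; multiplying gives
$$\bigl[h_\omega(x_1^{-1}x_0^{-1})h_\omega^{-1}\bigr]\,a=h_\omega\,x_1^{-1}x_0^{-1}\,g_1\,\mu^{-(t+\omega+1)}=h_\omega'\,g_1'\,\mu^{-(t+\omega+1)}\in M.$$
This deletes the outermost generators of the two free words while keeping the element inside $M$, and because \emph{every} $\ell_1^{-1}\ell_0^{-1}$ (with $\ell_1\in\{x_1,y_1\}$, $\ell_0\in\{x_0,y_0\}$) belongs to $M$, the same move removes any terminal generator of the left word together with any initial generator of the right word. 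The symmetry between the two solid tori $U_0$ and $U_1$ is exactly what makes this two-sided stripping uniform, and is the feature that shortens the argument relative to \cite[Theorem 1.3]{Nie}.

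Finally I would iterate the stripping, tracking the exponent of $\mu$, until the surviving element is a conjugate of a positive power $\mu^{N}$ of the meridian; at that point root-closedness gives a conjugate of $\mu$ in $M$, and conjugacy-closedness then gives $\mu\in M$ itself. The hard part is precisely this endgame bookkeeping: I must check that each stripping step lands on (a conjugate of, times inverse meridians) the corresponding element of a genuinely simpler positive $(1,1)$ L-space knot, so that an induction on the braid complexity $t+\omega$ closes with the $1$-bridge braids (where $\tau_0$ or $\tau_1$ has length zero) as base cases, and that the residual word is literally a power of a meridian conjugate so that the single appeal to root-closedness is legitimate. Everything else is the monoid, conjugacy, and longitude formalism assembled above.
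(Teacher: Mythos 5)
Your preliminary observations are correct and reproduce the easy part of the paper's argument: $y_0^{-1}$, $y_1^{-1}$, $x_0^{-1}$, $x_1^{-1}$ all lie in $M$, and the longitude relation places $a=h_\omega g_1\mu^{-(t+\omega+1)}$ in $M$. But the stripping scheme that follows has a fatal directional flaw. Every stripping step multiplies $a$ by a conjugate of a word in $x_0^{-1},y_0^{-1},x_1^{-1},y_1^{-1}$, so it only ever produces elements of the form (shorter positive word)$\,\cdot\,\mu^{-(t+\omega+1)}$: the exponent of $\mu$ stays frozen at $-(t+\omega+1)$ throughout, and if the two words exhaust simultaneously the process terminates at $\mu^{-(t+\omega+1)}\in M$, which is trivially true and carries no information (in general $h_\omega$ and $g_1$ have different word lengths, so you are instead left with a leftover positive word times $\mu^{-(t+\omega+1)}$, which is no better). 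In particular the process can never produce a conjugate of a \emph{positive} power of $\mu$, so the appeal to root-closedness at the end has nothing to act on. The element you must reach, $\mu$ itself, is ``more positive'' than anything your moves can generate, because all of your moves push in the negative direction.

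What is missing is exactly the hard core of the paper's proof: extracting positivity from the relations $y_0=\prod_i g_i\mu g_i^{-1}$ and $y_1=\prod_i h_i^{-1}\mu h_i$ used as \emph{lower} bounds rather than as sources of elements of $M$. In the paper's preorder language, one keeps selected factors to get $y_0\ge_k \left(g_1\mu g_1^{-1}\right)\left(\tilde g_i\mu\tilde g_i^{-1}\right)^{m_i}$, runs an induction over the suffixes $\tilde g_i$ of $g_1$ and the prefixes $\tilde h_i$ of $h_\omega$, and invokes the geometric counting identities $\sum_{i=0}^{t'-1}m_i=\omega-\omega'$ and $\sum_{i=0}^{\omega'-1}n_i=t-t'$ to conclude $g_1\ge_k\mu^{\omega+t'-\omega'+1}$ and $h_\omega\ge_k\mu^{t-t'+\omega'+1}$; played against $h_\omega g_1\le_k\mu^{t+\omega+1}$ this forces $\mu\le_k 1$, which is the assertion $\mu\in M$. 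None of this positivity extraction appears in your sketch. Your fallback --- induction on $t+\omega$ with $1$-bridge braids as base case --- also cannot work as stated: the stripped elements live in the knot group of $K$, not in the group of any simpler knot, and a membership statement for a different knot's meridian in that knot's monoid is a statement about a different group entirely, so it cannot be imported to conclude $\mu\in M$ here.
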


\begin{proof}
As in \cite[Section 3]{Nie}, we define the preorder $\le_k $ generated by $\mu$ and $(\mu^{2g(K)-1}\lambda)^{-1}$ on $\pi_1(S^3- K)$. Since $\mu =  x_0 y_0^{-1}=y_1^{-1}x_1$, we have $x_0\ge_k y_0$ and $x_1\ge_k y_1$. Since $\mu^{2g(K)-1} \lambda= h_\omega g_1 \mu^{-t-\omega -1 }$, we have $h_\omega g_1\le_k \mu^{t+\omega+1}$.

Let $\tilde{g}_0=1, \tilde{g}_1, \ldots, \tilde{g}_{t'}=g_1$ be all suffixes of $g_1$, and $\tilde{h}_0, \tilde{h}_1, \ldots, \tilde{h}_{\omega'}=h_\omega$ be all prefixes of $h_\omega$, ordered by length. Suppose that $\tilde{g}_i$ appears $m_i$ times in $g_1,g_2,\ldots, g_t$ for each $0\le i< t'$, and $\tilde{h}_i$ appears $n_i$ times in $h_1,h_2,\ldots, h_\omega$ for each $0\le i< \omega'$. Then we have 
\begin{align*}
y_0&=\left(g_1\mu g_1^{-1} \right)\left(g_2\mu g_2^{-1} \right)\cdots \left(g_t \mu g_t^{-1}\right)\\
	&\ge_k \left(g_1\mu g_1^{-1}\right)\left(\tilde{g}_{i} \mu \tilde{g}_{i}^{-1}\right)^{m_i},
\end{align*}
and
\begin{align*}
y_1&=\left(h_1^{-1}\mu h_1 \right)\left(h_2^{-1}\mu h_2 \right)\cdots \left(h_\omega^{-1} \mu h_\omega \right)\\
	&\ge_k \left(\tilde{h}_{i}^{-1} \mu \tilde{h}_{i}\right)^{n_i} \left(h_\omega^{-1} \mu h_\omega \right).
\end{align*}

For each $0\le i< t'$, we have either $\tilde{g}_{i+1}=y_0\tilde{g}_{i}$ or $\tilde{g}_{i+1}=x_0\tilde{g}_{i}=\mu y_0\tilde{g}_{i}$. And for $i=t'-1$, it is necessarily the latter case. So we have 
$$\tilde{g}_{i+1}\ge_k y_0 \tilde{g}_i\ge_k \left(g_1\mu g_1^{-1}\right) \tilde{g}_{i} \mu^{m_i}.$$
By induction, we have 
$$ g_1=\tilde{g}_{t'}=\mu y_0 \tilde{g}_{t'-1}\ge_k \mu g_1 \mu^{t'} g_1^{-1}\mu^{\sum_{i=0}^{t'-1} m_i}.$$
By symmetry, we have 
$$ h_\omega\ge_k \mu^{\sum_{i=0}^{\omega'-1} n_i} h_\omega^{-1} \mu^{\omega'} h_\omega\mu.$$

Here $t'$ (resp. $\omega'$) is the number of intersection points between $Q'' P\subset \rho$ and $D_0$ (resp. $RQ''\subset \rho$ and $D_1$), not counting $P$ and $R$. And $\sum_{i=0}^{t'-1} m_i$ (resp. $\sum_{i=0}^{\omega'-1} n_i$) is the number of intersection points between $Q'' P\subset \rho$ and $D_1$ (resp. $RQ'' \subset \rho$ and $D_0$). So we have $$\sum_{i=0}^{t'-1} m_i=\omega-\omega' $$
and
$$\sum_{i=0}^{\omega'-1} n_i=t-t'.$$

Then we have
\begin{align*}
h_\omega&\ge_k \mu^{t-t'} h_\omega^{-1} \mu^{\omega'} h_\omega \mu\\
&\ge_k \mu^{t-t'} h_\omega^{-1} \mu^{\omega'} h_\omega,
\end{align*}
So $h_\omega\ge_k \mu ^{t-t'+\omega'}$. Because $h_\omega g_1\le_k \mu^{t+\omega+1}$, we have $g_1\le_k\mu^{\omega+t'-\omega'+1}$. Then we have
\begin{align*}
g_1&\ge_k \mu g_1 \mu^{t'}g_1^{-1} \mu^{\omega-\omega'}\\
&\ge_k \mu g_1 \mu^{-1}
\end{align*}
Since $g_1 \ge_k \mu g_1 \mu^{-1}$, we get $g_1 \mu^{t'} g_1^{-1}\ge_k \mu^{t'}$. So we have
\begin{align*}
g_1&\ge_k \mu g_1 \mu^{t'}g_1^{-1} \mu^{\omega-\omega'}\\
&\ge_k \mu^{\omega+t'-\omega'+1}.
\end{align*}
By symmetry, we have $h_\omega \ge_k \mu^{t-t'+\omega'+1}$. By $h_\omega g_1\le_k \mu^{t+\omega+1}$, we have $\mu\le_k 1$. In other words, the meridian $\mu$ is in the root-closed, conjugacy-closed submonoid generated by $\mu^{2g(K)-1}\lambda$ and $\mu^{-1}$.
\end{proof}

Combining Lemma \ref{D-1} and Lemma \ref{D-2}, we proved Theorem \ref{main-D}. By \cite[Theorem 4.1]{Nie}, we proved Theorem \ref{non-left-orderable}.

\end{document}